\DeclareMathAlphabet{\mbb}{U}{bbold}{m}{n} % for missing doublestroke characters like 0 and 1
\newcommand \mb[1] {\ifthenelse{\equal{#1}{0}}{\mbb{0}}{\ifthenelse{\equal{#1}{1}}{\mbb{1}}{\mathbb{#1}}}} % use bbold's 0 and 1
\newcommand \mc {\mathcal}
\newcommand \RR {\mb R}
\newcommand \inv {^{-1}}
\newcommand \T {^\top}
\newcommand \invT {^{-1\top}}
\newcommand \sdots {\text{\scriptsize{\dots}}}
\newcommand \diag {\mathrm{diag}}
\newcommand \ie {\textit{i.e. }}
\newcommand{\on}[1]{\operatorname{#1}}
\newcommand{\set}[2]{ \left\{ #1 \left|~ \vphantom{#1} #2 \right. \right\} }
\renewcommand \subset {\subseteq}
\renewcommand{\l}{\left}
\renewcommand{\r}{\right}
\newtheorem{theorem}{Theorem}
\newtheorem{lemma}[theorem]{Lemma}
\newtheorem{assumption}[theorem]{Assumption}
\theoremstyle{definition}
\newtheorem{defn}[theorem]{Definition}
\newtheorem{example}[theorem]{Example}
\title{\LARGE \bf
%Plug-and-play Conditions for the Power Flow Solvability of interconnected lossless DC Microgrids
%Conditions for Plug-and-play Solvability of the Power Flow Equations for Interconnected Lossless DC Microgrids with Constant Power Loads
Plug-and-play Solvability of the Power Flow Equations for Interconnected DC Microgrids with Constant Power Loads
}
\author{Mark Jeeninga, Claudio De Persis and Arjan J. van der Schaft% <-this % stops a space
\thanks{*This work is supported by the NWO (Netherlands Organisation for Scientific Research) project 'Energy management strategies for interconnected smart microgrids' within the DST-NWO Joint Research Program on Smart Grids.}% <-this % stops a space
\thanks{$^{1}$Mark Jeeninga is with the Engineering and Technology Institute Groningen, and the Bernoulli Institute for Mathematics, Computer Science and Artificial Intelligence, University of Groningen, 9747AG Groningen, The Netherlands
{\tt\small m.jeeninga@rug.nl}}%
\thanks{$^{2}$Claudio De Persis is with Engineering and Technology Institute Groningen, University of Groningen, 9747AG Groningen, The Netherlands
{\tt\small c.de.persis@rug.nl}}%
\thanks{$^{3}$Arjan J. van der Schaft is with the Bernoulli Institute for Mathematics, Computer Science and Artificial Intelligence, University of Groningen, 9747AG Groningen, The Netherlands
{\tt\small a.j.van.der.schaft@rug.nl}}%
}
\begin{document}

\maketitle
\thispagestyle{empty}
\pagestyle{empty}

%%%%%%%%%%%%%%%%%%%%%%%%%%%%%%%%%%%%%%%%%%%%%%%%%%%%%%%%%%%%%%%%%%%%%%%%%%%%%%%%
\begin{abstract}
In this paper we study the DC power flow equations of a purely resistive DC power grid which consists of interconnected DC microgrids with constant-power loads.
We present a condition on the power grid which guarantees the existence of a solution to the power flow equations. 
In addition, we present a condition for any microgrid in island mode which guarantees that the power grid remains feasible upon interconnection.
These conditions provide a method to determine if a power grid remains feasible after the interconnection with a specific microgrid with constant-power loads.

Although the presented condition are more conservative than existing conditions in the literature, its novelty lies in its plug-and-play property. 
That is, the condition gives a restriction on the to-be-connected microgrid, but does not impose more restrictions on the rest of the power grid.
%That is, they give a condition on a microgrid such that solvability of the power flow equations is guaranteed, without imposing more restrictions on the rest of the power grid.
%That is, the presented condition, which guarantees that the power flow equations remain solvable when a specific microgrid is connected to the power grid, %ensures that the power flow equations remain solvable, but 
%only and do not impose more restrictions on the power grid.
\end{abstract}

%%%%%%%%%%%%%%%%%%%%%%%%%%%%%%%%%%%%%%%%%%%%%%%%%%%%%%%%%%%%%%%%%%%%%%%%%%%%%%%%
\section{Introduction}
%\%[Increasing demand on power grids. Introduction of renewables]\%
The increasing use and demand of electricity is pushing our power grids to their limits. 
In addition, the incorporation of renewable energy sources is steadily introducing more uncertainty into our energy sources.
It has therefore become a necessity to better understand the fundamental limits of our power grid, as well as to come up with smart ways to generate and distribute power.

%\%[Design philosophy of Smart microgrids]\%
To address these issues, there has been an increasing interest in the study of microgrids. A microgrid is a small power grid within a greater power grid.
One of their key features is their plug-and-play capability: a microgrid can disconnect from the main grid to operate in island mode, and reconnect whenever necessary. 
This allows for leveling out the fluctuations of power generation and consumption, as well as for a microgrid to disconnect when the main grid suffers from a power outage. 

There has been an increasing interest in DC (direct current) microgrids. A large portion of the day-to-day energy consumption is converted to DC. Together with the rise of photo-voltaic cells and advances in battery storage, it makes more and more sense to implement DC power grids on a larger scale.

%\%[Literature on solvability in power grids]\%
The matching of supply and demand of power in a power grid is governed by its power flow equations.
If these equations have no solution, long-term voltage stability is lost and phenomena such as blackouts and voltage collapse occur \cite{paper13}.
The introduction of constant-power loads leads to non-linearities for which the power flow equations may not be solvable.
Conditions which guarantee their solvability are known, but are considered conservative \cite{paper4,paper12,paper8}.

There has been some research on the control of DC microgrids.
A distributed control scheme was proposed in \cite{paper15}, and a plug-and-play control scheme was proposed in \cite{paper5}, although neither paper consider constant-power loads.
%\%[Literature on solvability in DC smart grids]\%
To the best of our knowledge, no theoretical advances have been made towards power flow solvability where new sources and/or constant-power loads are introduced, and the best available approach is to recalculate the known conditions for the altered power grid.

To this end, the goal of this paper is to give conditions on the power grid and a to-be-attached microgrid, not assuming any control scheme, which guarantees that (i) the power flow equations of the power grid are solvable, (ii) the condition for (i) also holds for the interconnection of the power grid with the microgrid which makes the power flow equations of the interconnection solvable, and (iii) the conditions for (i) still hold for the original power grid after the microgrid is connected.
%In particular, we present conditions on the power grid which remain the same as more microgrids introduced.
%That is, the conditions on a new microgrid are such that the interconnection ensures that the conditions on the original power grid still hold.
%\mjcomment{Add a reason why. What is the advantage?}
The main advantage of this approach is that, as new microgrids are attached to the power grid, reverification of condition (i) is not necessary. We refer to this as the \emph{plug-and-play property}.

The novelty of the presented approach is that it uses the (block) Cholesky decomposition as a theoretical tool to analyze the power flow equations. Commonly, whenever a new load is introduced to a power grid, conditions for the feasibility of the power grid should be re-evaluated. To suppress this effect we restrict to a ``directional'' power flow condition, which is based on the block Cholesky decomposition, where each block represents a microgrid.
%A condition is presented which ensures that a newly attached microgrid does not inhibit the feasibility of the interconnected power grid, given that the . 

%
%goal:
%
%setting is: 1. A microgrid does not generate enough power on its own, so it connects to the power grid to satisfy its demand.
%2. A microgrid generates an excess of power, and connects to the power grid to supply the power to the power grid and other microgrids.

%In this paper we consider the case of connecting a DC microgrid in island mode to a DC power grid. More generally, we will consider the simultaneous introduction of new loads and sources in a DC power grid under a mild assumption.

%\mjcomment{Maybe add that we make a case that the Cholesky factorization can be used not only as a computational tool, but also for analysis.}

\subsection{Organization of the Paper}
This paper is organized as follows. Section \ref{prelim section} establishes notation, lists a number of graph theoretic notions and reviews M-matrices. 
Section \ref{power flow section} deals with the feasibility of a DC power grid. In Section \ref{interconnection section} we analyze the interconnection of multiple microgrids and review the block Cholesky decomposition. From this we derive a sufficient condition for solvability of the power flow equations. In Section \ref{dynamic section} we consider the process of interconnecting a microgrid to a power grid, and state our main result. Section \ref{conclusion section} concludes the paper.

%\clearpage
\section{Preliminaries}\label{prelim section}
\subsection{Notation} All vectors and matrices in this paper are real-valued. We denote a diagonal matrix by $[d]:=\diag(d_1,\sdots,d_n)$ for a vector $d$. Note that $[v]w = [w]v$ for any pair of vectors $v,w$. Inequalities between matrices such as $A>0$ and $A\ge 0$ are meant element-wise, and the same holds for vectors. The notation $\mb 0$ and $\mb 1$ is used for all-zeros and all-ones vectors, respectively.
We let $I_n$ denote the $n\times n$ identity matrix. 
\begin{defn}
We say $A$ is \emph{order-preserving} if $x<y$ implies $Ax<Ay$, for all vectors $x,y$. 
\end{defn}
It can be shown that $A$ is order-preserving if and only if $A\ge 0$ and $A\mb 1 > \mb 0$.
Any nonnegative invertible matrix is order-preserving, and its inverse is known in the literature as a \emph{monotone matrix} \cite{book3}.
The sum and product of two order-preserving matrices is again order-preserving.

\subsection{Graph Theory}
We let it be understood that by a graph we mean an undirected weighted graph with no self-loops. That is, a graph $\Gamma$ is the tuple $(\mc V,\mc E,\on w)$ with node set $\mc V = \{1,\sdots,|\mc V|\}$, edge set $\mc E\subset \set{\{i,j\}}{i,j\in \mc V, i\neq j}$, and a map $\on w: \mc E\to \RR_{>0}$ of edge weights.

For any $\tilde {\mc V}\subset \mc V$, the subgraph induced by $\tilde {\mc V}$ is the tuple $(\tilde {\mc V},\tilde {\mc E},\on w)$ where $\tilde {\mc E} = \set{\{i,j\}\in \mc E}{i,j\in \tilde {\mc V}}$.

We say that two nodes $i,j\in \mc V$ are path-connected with respect to the graph $\Gamma=(\mc V,\mc E,\on w)$ if there exists a path $(v_1,v_2,\sdots,v_l)$ such that $\{v_r,v_{r+1}\}\in \mc E$ for $r=1,\sdots,l-1$, $i=v_1$ and $j=v_l$.

The node set $\mc V$ of a graph can be partitioned into connected components such that two nodes are in the same connected component if and only if they are path-connected with respect to that graph.

We let the Laplacian matrix $\on L(\Gamma)$ of the graph $\Gamma=(\mc V,\mc E,\on w)$ be defined by $\on L(\Gamma)_{ij} = -\on w(\{i,j\})$ if $\{i,j\}\in \mc E$, $\on L(\Gamma)_{ij} =0$ if $i\neq j$ and $\{i,j\}\not\in \mc E$, and $\on L(\Gamma)_{ii} = -\sum_{j\neq i}\on L(\Gamma)_{ij}$.
Each graph is fully determined by its Laplacian matrix.

\subsection{M-matrices}
We define an M-matrix as follows.
\begin{defn}
A matrix $A$ is an M-matrix if there exists a matrix $B\ge 0$ such that $A = sI-B$ where $s\ge\rho(B)$, the spectral norm of $B$. It is invertible if $s>\rho(B)$.
\end{defn}
The diagonal elements of an M-matrix are nonnegative, and its off-diagonal elements are nonpositive.

If an M-matrix is invertible, its inverse is nonnegative. Any Schur complement of an invertible M-matrix is again an invertible M-matrix.\footnote{See \cite{book3}, Theorem 6.2.3 and Exercise 10.6.1.} 

%\clearpage
\section{Feasibility of DC Power Grids}\label{power flow section}
We consider a purely resistive DC grid where $\begin{pmatrix}
V_L\\V_S
\end{pmatrix}>\mb 0$ and $\begin{pmatrix}
I_L\\I_S
\end{pmatrix}$ denote the voltages potentials and currents at the nodes, respectively, which are real-valued and partitioned according to nodes being loads ($ L $) or sources ($ S $).
The power at each node is given by 
\begin{align*}
\begin{pmatrix}
- P_L\\P_S
\end{pmatrix} = \begin{bmatrix}
\begin{pmatrix}
V_L\\V_S
\end{pmatrix}
\end{bmatrix}
\begin{pmatrix}
I_L\\I_S
\end{pmatrix},
\end{align*}
where $P_L,P_S>\mb 0$, indicating that load nodes consume power, while source nodes supply power.
We consider the setting where each load is a constant-power load, and therefore take $P_L$ constant.
The voltage and current in a resistive circuit are related via the weighted Laplacian (Kirchhoff) matrix $Y=\begin{pmatrix}
Y_{LL} & Y_{LS}\\
Y_{SL} & Y_{SS}
\end{pmatrix}$, with the weights corresponding to the admittance of the lines between the nodes in the power grid.\footnote{While we do not consider shunts in our power grids, all results presented in this paper hold also for loads with shunts, for which one would use so-called grounded Laplacian matrices.}
%The assumption that lines are purely resistive implies that $Y$ is real valued. 

The current at the load nodes flowing into the power grid is given by $I_L = Y_{LL}V_L + Y_{LS}V_S < \mb 0$.
To satisfy the power demand of the loads, the following equation, known as the (purely resistive) DC power flow equation, should be satisfied for $V_L>\mb 0$, where $P_L$ and $V_S$ are given:
\begin{align}\label{main problem}
[V_L]Y_{LL}V_L + [V_L] Y_{LS}V_S + P_L = \mb 0.
\end{align}
To simplify notation, we introduce the following definition.
\begin{defn}
Let $A$ be a square matrix and vectors $b\ge \mb 0,c>\mb 0$.
We say $(A,b,c)$ is \emph{feasible} if there exists a vector $x>\mb 0$ such that $[x]Ax - [b]x + c = \mb 0$.
\end{defn}

In addition, we say the power grid represented by $Y$ is \emph{feasible} if $(Y_{LL},-Y_{LS}V_S,P_L)$ is feasible, where $V_S$ and $P_L$ are given. Note that feasibility does not depend on $Y_{SS}$.

Being a quadratic equation in the entries of $V_L$, (\ref{main problem}) is not always solvable. A necessary and sufficient condition for solvability is not known when multiple interconnected load nodes are considered.\footnote{If there are no connections among load nodes, $Y_{LL}$ is diagonal and each row of (\ref{main problem}) is a quadratic equation in $V_{L,i}$. The discriminant associated to each row is nonnegative if and only if there exists a positive solution to \eqref{main problem}.} 
%The problem of finding conditions such that (\ref{main problem}) is solvable is known as the power flow problem.
%\mjcomment{Check power flow prolem statement with Zhao-D\"orfler paper.}

%Note that \eqref{main problem} does not depend on $Y_{SS}$, the lines between source nodes.
%
The paper \cite{paper1} gives the following sufficient condition for the existence of a solution to (\ref{main problem}), albeit in a different form and context.

\begin{theorem}[Simpson-Porco \textit{et al.}]\label{SP result}
If $Y_{LL}\inv[V_L^*]\inv P_L< \frac 1 4 V_L^* $, then $(Y_{LL},-Y_{LS}V_S,P_L)$ is feasible, where $V_L^* := -Y_{LL}\inv Y_{LS} V_S$ are the open-circuit voltages. 
\end{theorem}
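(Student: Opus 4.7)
The plan is to recast the power flow equation (\ref{main problem}) as a fixed-point equation and apply Brouwer's fixed-point theorem on a suitable order interval. Because the definition of $V_L^*$ requires $Y_{LL}$ to be invertible, and $Y_{LL}$ is a principal submatrix of a Laplacian and hence an invertible M-matrix, we have $Y_{LL}\inv \ge 0$. For any $V_L > \mb 0$, left-multiplying (\ref{main problem}) by $[V_L]\inv$ and solving for $V_L$ yields the equivalent fixed-point problem
\[
V_L = V_L^* - Y_{LL}\inv [V_L]\inv P_L =: F(V_L),
\]
with $F$ continuous on the strictly positive cone.

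Next I would look for an invariant order interval of the form $K := \set{V_L}{\alpha V_L^* \le V_L \le V_L^*}$ for some $\alpha \in (0,1)$. The upper bound $F(V_L) \le V_L^*$ is automatic because $Y_{LL}\inv[V_L]\inv P_L \ge \mb 0$. For the lower bound, if $V_L \ge \alpha V_L^*$ then $[V_L]\inv \le \alpha\inv[V_L^*]\inv$ componentwise, and using $Y_{LL}\inv \ge 0$ together with the hypothesis gives
\[
Y_{LL}\inv[V_L]\inv P_L \le \alpha\inv Y_{LL}\inv[V_L^*]\inv P_L < \tfrac{1}{4\alpha}V_L^*,
\]
so $F(V_L) > \l(1 - \tfrac{1}{4\alpha}\r)V_L^*$. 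Requiring this to be $\ge \alpha V_L^*$ reduces to $(2\alpha - 1)^2 \le 0$, forcing $\alpha = \tfrac 1 2$. With this choice $F$ maps the compact convex set $K$ into itself, so Brouwer's fixed-point theorem furnishes $V_L \in K$ with $F(V_L) = V_L$; this $V_L$ is strictly positive and, by construction, solves (\ref{main problem}).

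The key observation, and the place where the proof is delicate, is that the constant $\tfrac 1 4$ in the hypothesis is precisely tuned so that $\alpha = \tfrac 1 2$ is the unique admissible trapping factor; any other $\alpha$ either violates $F(K)\subset K$ or would demand a strictly stronger assumption. An equivalent constructive alternative I would keep in mind is to exploit the fact that $F$ is order-preserving (since $[\cdot]\inv$ is order-reversing and $Y_{LL}\inv \ge 0$) and iterate $V_L^{(k+1)} = F(V_L^{(k)})$ starting from $V_L^{(0)} = V_L^*$: the sequence is monotonically decreasing, remains in $K$ by the argument above, and converges to the same fixed point, giving an explicit construction rather than an abstract existence proof.
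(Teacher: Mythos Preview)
The paper does not supply its own proof of this theorem: it is quoted from \cite{paper1} (Simpson-Porco, D\"orfler, Bullo), so there is no in-paper argument to compare against. Your fixed-point argument is correct and is essentially the route taken in the original reference: rewrite \eqref{main problem} as $V_L = F(V_L) := V_L^* - Y_{LL}\inv[V_L]\inv P_L$, show $F$ maps the box $\{\tfrac12 V_L^* \le V_L \le V_L^*\}$ into itself using $Y_{LL}\inv\ge 0$ and the hypothesis, and invoke Brouwer. The observation that $\alpha=\tfrac12$ is the unique trapping factor, and the monotone-iteration variant starting from $V_L^*$, are both standard refinements of the same idea. One small caveat: the claim ``$Y_{LL}$ is a principal submatrix of a Laplacian and hence an invertible M-matrix'' is not automatic without a connectivity assumption (every load path-connected to a source); in the paper this is supplied later by Assumption~\ref{as} and Lemma~\ref{example for first assumption}, and here it is implicit in the well-definedness of $V_L^*$ and of $[V_L^*]\inv$ in the hypothesis.
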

The open-circuit voltages correspond to the voltage potentials in the situation where there is no power demand and therefore $I_L=\mb 0$.

%By summing over the rows we obtain 
%\begin{align*}
%(V_L + \frac 1 2 Y_{LL}\inv Y_{LS} V_S)\T Y_{LL} (V_L + \frac 1 2 Y_{LL}\inv Y_{LS} V_S) \\- \frac 1 4 V_S\T Y_{LS}Y_{LL}\inv Y_{LS} V_S = -\mb 1\T P_L.
%\end{align*}
%Since $Y_{LL}$ is positive definite, it follows that (\ref{main problem}) has no solution for sufficiently large $\mb 1\T P_L$.
%This supports the intuition that for a fixed power supplied by the sources, 

%\clearpage

\section{Microgrids with a fixed\break interconnection topology}\label{interconnection section}

Throughout the rest of this paper we consider a DC power grid consisting of multiple interconnected DC microgrids.
Our aim is to study the related power flow problem, while respecting the topological structure of microgrids. In particular, we phrase a sufficient condition for (\ref{main problem}) for such a power grid in terms of conditions for individual microgrids and
according to a hierarchical structure. This hierarchical structure represents the order in which the microgrids are connected to the power grid. To clarify, microgrids which are lower in the hierarchy were added before the ones that are higher in the hierarchy.

%The main drawback of
 
The presented approach assumes that sources do not supply power to load nodes which are lower in the hierarchy, and therefore leads to more conservative conditions.
%this approach allows us to  
On the other hand, the main feature of the approach is that only considering power flow between microgrids in a ``specified direction" gives conditions for which the introduction of a new microgrid does not alter the conditions on the original power grid.

The main result of this paper is derived from Theorem \ref{SP result} and relies on the block Cholesky decomposition (BCD) to describe the hierarchical structure between the microgrids. The BCD is reviewed in Section \ref{bcd section}.

Throughout the rest of this section we focus on a fixed topology of microgrids and study their feasibility. In Section \ref{dynamic section} we consider a dynamic topology of microgrids, where the introduction and interconnection of a new microgrid to a power grid is studied.

\subsection{System description}\label{system description}
We consider a DC power grid as in Section \ref{power flow section}, where again $Y=\begin{pmatrix}
Y_{LL} & Y_{LS}\\
Y_{SL} & Y_{SS}
\end{pmatrix}$. Our power grid is subdivided into $k$ microgrids. We number our microgrids and denote the $i$-th microgrid by $\mc M_i$. The numbering of the microgrids represents the order in which the microgrids were attached to the power grid, which induces a hierarchical structure on the microgrids.

%The number of load nodes in $\mc M_i$ is denoted by $n_i$, whereas $n = \sum_{i=1}^k\limits n_i$ denotes the number of load nodes in the power grid.

%% Put this somewhere else in the paper?

%Throughout this section we define $A:= Y_{LL}$, for the sake of notation. The matrix is partitioned as 
%$A =: \begin{pmatrix}
%a_{11} & \cdots & a_{1k}\\
%\vdots & & \vdots\\
%a_{k1} & \cdots & a_{kk}
%\end{pmatrix}$, where $a_{ij} \in \RR^{n_i\times n_j}$. The submatrix $a_{ii}$ corresponds to the lines among load nodes in $\mc M_i$, while the submatrix $a_{ij}$ for $i\neq j$ corresponds to the lines between loads in $\mc M_i$ and $\mc M_j$. Note that $A$ is symmetric and $a_{ij}=a_{ji}\T$.
%\mjcomment{The introduction of $A$ does not really add to the story. In that case, we also do not need to define $n_\bullet$ here.}

We let $\Gamma = (\mc V,\mc E,\on w)$ to be the graph represented by $Y$. The nodes of a microgrid $\mc M_i$ induce a subgraph of $\Gamma$. Such a subgraph is denoted by $\Gamma_{\mc M_i}$.

In order to state the main result of this section, we require the following assumption on the microgrids.
\begin{assumption}\label{as}
For each microgrid $\mc M_i$, each load node in $\mc M_i$ satisfies at least one the following conditions:
\begin{enumerate}
\item[i)] The node is path-connected with respect to the graph $\Gamma_{\mc M_i}$ to a source node in $\mc M_i$;
\item[ii)] The node is path-connected with respect to the graph $\Gamma$ to a node in $\mc M_j$ with $j<i$.
\end{enumerate}
\end{assumption}
These conditions can be interpreted as follows: Condition \ref{as}.i is equivalent to the condition that the node is path-connected to a source node in $\mc M_i$ when the microgrid operates in island mode. Condition \ref{as}.ii is equivalent to the condition that every load node is path-connected to a node lower in the hierarchy.
%An equivalent condition for \ref{as}.i is that the load node is path-connected to a source node in $\mc M_i$ when the microgrid operates in island mode.
Note that Assumption \ref{as} only requires that the first microgrid is able to operate in island mode. In addition, note that we do not assume that the power grid is connected.

Assumption \ref{as} prevents the case where a microgrid is connected to a power grid while some of its load nodes are not path-connected to a source node; such a case can never be feasible. More precisely, it states that for each load node there exists a path $(v_1,\sdots,v_l)$ to a source node such that it descends hierarchy of microgrids; that is, if $v_i\in \mc M_s$ and $v_j\in \mc M_t$, then $i<j$ implies $s\ge t$. 

\begin{figure}[t]
   \centering
	\vspace{5pt}
   \begin{tabular}{@{}c@{\hspace{.5cm}}c@{}}
       \includegraphics[width=.5\linewidth]{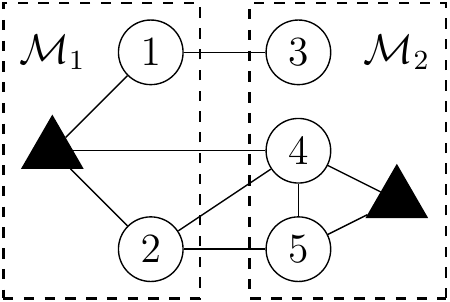}
   \end{tabular}
 \caption{An example of the interconnection of two microgrids $\mc M_1$, $\mc M_2$ satisfying Assumption \ref{as}. Source nodes are represented by $\blacktriangle$ and load nodes are represented by $\bigcirc$.}
 \label{example figure}
\end{figure}
\begin{example}
Figure \ref{example figure} represents an interconnection of two microgrids for which Assumption \ref{as} holds: Nodes 1, 2, 4 and 5 satisfy \ref{as}.i, while nodes 3, 4 and 5 satisfy \ref{as}.ii. Note that node 3 is not path-connected with respect to $\Gamma_{\mc M_2}$ to a source in $\mc M_2$. This implies that Assumption \ref{as} would not hold for node 3 if the numbering of $\mc M_1$ and $\mc M_2$ would be interchanged. Put differently, $\mc M_2$ cannot operate in island mode. 
\end{example}

Assumption 5 implies the following Lemma.
\begin{lemma}\label{example for first assumption}
The matrix $Y_{LL}$ is an invertible M-matrix.
\end{lemma}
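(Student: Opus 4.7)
The plan is to combine the Z-matrix structure that $Y_{LL}$ inherits from $Y$ being a weighted Laplacian with a positive-definiteness argument whose only nontrivial input is the topological consequence of Assumption \ref{as}: every load node of $\Gamma$ lies in a connected component of $\Gamma$ that contains at least one source.

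First I would record the structural observations that follow immediately from the Laplacian definition: $Y_{LL}$ is symmetric, its off-diagonal entries are $-\on w(\{i,j\})\le 0$, and its diagonal entries are weighted degrees and hence nonnegative. Picking $s$ strictly larger than the maximum diagonal entry of $Y_{LL}$ makes $B:=sI-Y_{LL}\ge 0$, so $Y_{LL}=sI-B$ already fits the shape required in the definition of an M-matrix; only the strict inequality $s>\rho(B)$ remains to be established, and this is exactly equivalent to $Y_{LL}$ being positive definite (since for symmetric $B\ge 0$, Perron--Frobenius gives $\rho(B)=\lambda_{\max}(B)$, so $\lambda_{\min}(Y_{LL})=s-\rho(B)$).

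Next I would prove the key topological claim by induction on the microgrid index $i$: every load node of $\mc M_i$ is path-connected in $\Gamma$ to some source node of the power grid. For the base case $i=1$, condition \ref{as}.ii is vacuous, so every load in $\mc M_1$ satisfies \ref{as}.i and is path-connected inside $\Gamma_{\mc M_1}$ (hence in $\Gamma$) to a source in $\mc M_1$. For $i>1$, a load satisfying \ref{as}.i is handled as before, while a load satisfying \ref{as}.ii is path-connected in $\Gamma$ to some node $u\in\mc M_j$ with $j<i$; if $u$ is a source we are done, and if $u$ is a load the induction hypothesis produces a path in $\Gamma$ from $u$ to a source, which we concatenate. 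I expect this step to be the main obstacle: one has to track carefully that the path promised by \ref{as}.ii lives in the full graph $\Gamma$ rather than in any single subgraph $\Gamma_{\mc M_j}$, and that the hierarchical ordering is what lets the induction terminate.

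With that topological fact available, I would conclude positive definiteness. Since $Y$ is a symmetric weighted Laplacian it is positive semidefinite, and so is its principal submatrix $Y_{LL}$. Suppose $Y_{LL}x=\mb 0$ and extend $x$ to $\tilde x$ by padding zeros on the source coordinates; then
\[
0 = x\T Y_{LL} x = \tilde x\T Y \tilde x = \tfrac{1}{2}\sum_{\{i,j\}\in\mc E}\on w(\{i,j\})(\tilde x_i-\tilde x_j)^2,
\]
which forces $\tilde x$ to be constant on every connected component of $\Gamma$. By the induction above, each connected component of $\Gamma$ that contains a load also contains a source, where $\tilde x$ vanishes by construction; hence $x=\mb 0$ and $Y_{LL}$ is nonsingular. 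Combining this with the opening observation gives $s>\rho(B)$ and shows $Y_{LL}$ is an invertible M-matrix in the sense of the paper's definition.
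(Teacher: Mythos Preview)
Your argument is correct and reaches the same conclusion by a different route. Both proofs extract the same topological consequence of Assumption~\ref{as}---every load node is path-connected in $\Gamma$ to a source---and you spell out the induction that the paper only asserts. From there the paths diverge: the paper observes that this connectivity makes $Y_{LL}$ weakly chained diagonally dominant and cites Corollary~4 of \cite{paper4} to conclude that it is an invertible M-matrix, whereas you argue directly that $Y_{LL}$ is positive definite via the Laplacian quadratic form $\tilde x\T Y\tilde x=\sum_{\{i,j\}\in\mc E}\on w(\{i,j\})(\tilde x_i-\tilde x_j)^2$ together with the boundary condition $\tilde x|_S=\mb 0$. The paper's route is shorter because it outsources the linear-algebra step to a citation; your route is self-contained, exploits the symmetry of $Y$ (which the WCDD argument does not need), and makes explicit why load-to-source connectivity is precisely what trivializes the kernel of $Y_{LL}$. (One cosmetic point: with the paper's convention that $\mc E$ consists of unordered pairs, the factor $\tfrac12$ in your quadratic-form identity should be dropped.)
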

\begin{proof}
It follows from Assumption \ref{as} that every load node is path-connected to a source node. This implies that $Y_{LL}$ is weakly chained diagonally dominant, and hence an invertible M-matrix by Corollary 4 of \cite{paper4}.
\end{proof}

%\mjcomment{I am not sure if we will need the next assumption yet.}
%\begin{assumption}
%Source nodes only share lines with nodes from the same microgrid. %(This does something with the $b$-vector? should this be addressed? do we need this?)
%\end{assumption}

%\begin{lemma}
%If $A$ is invertible and $A\inv b>\mb 0$, then $(A,b,c)$ is feasible if and only if $(A\inv,A\inv b,c)$ is feasible.
%\end{lemma}
%
%\begin{lemma}
%Let $A$ be an invertible irreducible M-matrix, then $A\inv>0$.
%\end{lemma}
%
%\begin{lemma}
%Let $A>0$ and $A=A\T$. If $A[b]\inv c<\frac 1 4 b$, then $(A,b,c)$ is feasible.
%\end{lemma}
%
%
%The power demand from load nodes $P_L$ can be seen as an upper bound. This follows from the implication $P_l^\prime \le P_L \Rightarrow Y_{LL}\inv[V_L^*]\inv P_L^\prime \le Y_{LL}\inv[V_L^*]\inv P_L$ due to $Y_{LL}\inv[V_L^*]\inv $ being nonnegative.

%%%%%%%%%%%%%%%%%%%%%%%%%%%%%%%%%%%%%%%%%%%%%%%%%%%%%%%%%%%%%%%%%%%%%%%%%%%%%%%%
%\clearpage
\subsection{The block Cholesky decomposition}\label{bcd section}
This section reviews the block Cholesky decomposition (BCD) of a symmetric positive definite matrix. More specifically, we consider the BCD of M-matrices.

Consider a symmetric positive definite matrix $A\in\RR^{n\times n}$, along with a partition of the rows and columns of $A$ into $k$ nonempty subsets. We obtain $A = \begin{pmatrix}
a_{11} & \cdots & a_{1k}\\
\vdots & & \vdots\\
a_{k1} & \cdots & a_{kk}
\end{pmatrix}$ and $a_{ij} \in \RR^{n_i\times n_j}$ such that $\sum_{i=1}^k\limits n_i=n$.
Note that $a_{ij} = a_{ji}\T$.
We define the matrices
\begin{align*}
A_i &:= \begin{pmatrix}
a_{11} & \cdots & a_{1i}\\
\vdots & & \vdots\\
a_{i1} & \cdots & a_{ii}
\end{pmatrix} \text{ for }i=1,\sdots,k;\\
\alpha_i &:= \begin{pmatrix}
a_{i1} & \cdots & a_{i,i-1}
\end{pmatrix} \text{ for }i=2,\sdots,k.
\end{align*}
\vspace{-5pt}\\
We have $A= A_k$ and $A_i = \begin{pmatrix}
A_{i-1} & \alpha_i\T \\ \alpha_i & a_{ii}
\end{pmatrix}$ for $i=2,\sdots,k$.

Let the matrices $C_i, D_i$ for $i=1,\sdots,k$ be iteratively defined by
\begin{align*}
&C_1 := I_{n_1}, &C_i &:= \begin{pmatrix}
C_{i-1} & 0\\
\alpha_i A_{i-1}\inv C_{i-1} & I_{n_i}
\end{pmatrix}, \\
&D_1:= A_{11}, &D_i &:= \begin{pmatrix}
D_{i-1} & 0 \\ 0 & a_{ii} - \alpha_i A_{i-1}\inv \alpha_i\T
\end{pmatrix}.
\end{align*}
It follows that $A_i = C_i D_i C_i\T$ for $i=1,\sdots,k$.\\
Let $C:= C_k$, $D:= D_k$, then the \emph{block Cholesky decomposition} (BCD) of $A$ is given by $A = C D C\T$.

%% We can say something about how easy it is to compute these matrices.
Note that
\begin{align*}
&C_i = \begin{pmatrix}
C_{i-1} & 0\\
\alpha_i A_{i-1}\inv C_{i-1} & I_{n_i}
\end{pmatrix}=\begin{pmatrix}
C_{i-1} & 0\\
\alpha_i C_{i-1}\invT D_{i-1}\inv & I_{n_i}
\end{pmatrix};\\
&C_i\inv = \begin{pmatrix}
C_{i-1}\inv & 0\\
-\alpha_i A_{i-1}\inv & I_{n_i}
\end{pmatrix}=\begin{pmatrix}
C_{i-1}\inv & 0\\
-\alpha_i C_{i-1}\invT D_{i-1}\inv C_{i-1}\inv & I_{n_i}
\end{pmatrix}
\end{align*}
from which it can be observed that only the diagonal blocks of $D$ have to be inverted to obtain the BCD of $A$.

By taking $k=n$ we obtain the Cholesky decomposition of $A$, and the BCD therefore generalizes the Cholesky decomposition.

Suppose that $A$ is an invertible M-matrix, then $\alpha_i\le 0$ and $A_i\inv \ge 0$. Since $a_{ii}-\alpha_iA_{i-1}\inv\alpha_i\T$ is a Schur complement of the invertible M-matrix $A_i$, it is again an invertible M-matrix, and therefore $(a_{ii}-\alpha_iA_{i-1}\inv\alpha_i\T)\inv \ge 0$. Using this fact, it can be shown by induction on $i$ that $D_i\inv \ge 0$ and $C_i\inv \ge 0$. Since both are invertible, it follows that $D_i\inv$ and $C_i\inv$ are order-preserving for all $i$.

%\clearpage
\subsection{Iterative feasibility condition using the BCD}\label{iterative section}
%The purpose of this section is to establish a sufficient condition for feasibility of a power grid which takes the topology of the microgrids into consideration.
Continuing Section \ref{system description}, we define $A:=Y_{LL}$. We let $C,D$ such that $A=CDC\T$ is the BCD of $A$ where each block corresponds to a microgrid in the power grid. We will use the notation for submatrices of $A$ as in Section \ref{bcd section}.

The intuition behind the BCD of $Y_{LL}$ is not straight-forward, but can be seen as follows.
Each diagonal block of $D$ describes the conductances between load nodes in a microgrid after all load nodes which are lower in the hierarchy are eliminated by Kron reduction.
Put differently, for each block it is assumed that there is no current flow at the load nodes which are lower in hierarchy, which means that these nodes do not consume power.

The matrix $C$ describes all weighted paths between load nodes in distinct microgrids which ascend the hierarchy, and is therefore (block) lower triangular. The off-diagonal elements of $C$ are nonnegative, while $C$ has ones on its diagonal.
The matrix $CD$ roughly represents the conductances between load nodes in a microgrid, under the assumption that load nodes which are lower in hierarchy draw no current, compensated by the conductances between load nodes in microgrids which are lower in hierarchy and have the same assumption. This compensation is based on paths which descend the hierarchy, giving $CD$ also a block lower triangular structure.

%
%by eliminated the load nodes in $\mc M_j$ for $j<i$ using Kron reduction.
%This describes the conductance between each load node of $\mc M_i$ in the circuit if the nodes of $\mc M_j$ for $j<i$ have no current flow, or similarly, when there is no power consumption from loads lower in the hierarchy.
%
%Consider the resistive circuit of all source nodes, together with the load nodes in the microgrids $\mc M_1,\sdots,\mc M_i$, where each line from a load node to a load node which is higher in the hierarchy % or to a source node 
%is replaced by a shunt\footnote{A shunt is a component at a node which extracts current proportional to the voltage potential at the load.} of the same conductance. The diagonal block in $D$ corresponding to $\mc M_i$ is load component of the circuit obtained  
%This describes the conductance between each load node of $\mc M_i$ in the circuit if the nodes of $\mc M_j$ for $j<i$ have no current flow, or similarly, when there is no power consumption from loads lower in the hierarchy.
%
%The matrix $C$ describes the weighted paths between load nodes in different microgrids which ascend the hierarchy, and is therefore block lower triangular. The matrix $CD$ can then be thought of as the reconstruction obtained from $D$ of the lines which ascend the hierachy.

%\mjcomment{If there is room we should spend some time on what this decomposition means in terms of the power grid. I am afraid this will get messy and will not add much the general understanding of the paper.}

\begin{lemma}\label{path inverse}
The load nodes $k$ and $l$ in $\mc M_i$ are path-connected with respect to the graph induced by the load nodes in $\mc M_1,\sdots,\mc M_i$ if and only if $(a_{ii}-\alpha_iA_{i-1}\inv \alpha_i\T)\inv_{kl} > 0$.
\end{lemma}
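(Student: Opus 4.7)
The plan is to exploit the fact that $S := a_{ii} - \alpha_i A_{i-1}\inv \alpha_i\T$ is itself an invertible M-matrix (it is a Schur complement of the invertible M-matrix $A_i$, which is a principal submatrix of $Y_{LL}$, and by Lemma \ref{example for first assumption} together with the M-matrix facts quoted in the preliminaries). For any invertible M-matrix $M = sI-B$ with $B\ge \mb 0$ and $s>\rho(B)$, the Neumann series $M\inv = s\inv\sum_{n\ge 0}(B/s)^n$ shows $(M\inv)_{kl}>0$ if and only if there exist some $n$ with $(B^n)_{kl}>0$, i.e.\ if and only if $k$ and $l$ are path-connected in the graph whose edges are the nonzero off-diagonal entries of $M$. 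I would state this as a standing fact and apply it in two places: to $A_{i-1}$ (whose off-diagonal nonzeros are exactly the edges of $\Gamma$ between load nodes of $\mc M_1,\sdots,\mc M_{i-1}$), and to $S$.

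The heart of the proof is then to translate the statement ``$S_{kl}\neq 0$'' into path-language inside the load subgraph of $\mc M_1,\sdots,\mc M_i$. Since $a_{ii,kl}\le 0$ and every term of $(\alpha_i A_{i-1}\inv \alpha_i\T)_{kl} = \sum_{p,q}(\alpha_i)_{kp}(A_{i-1}\inv)_{pq}(\alpha_i)_{lq}$ is nonnegative (product of two nonpositive and one nonnegative factor), the two contributions to $S_{kl} = a_{ii,kl}-(\alpha_i A_{i-1}\inv \alpha_i\T)_{kl}$ cannot cancel. Hence $S_{kl}\neq 0$ iff either $k$ and $l$ are directly adjacent in $\Gamma$, or there exist load nodes $p,q$ in $\mc M_1,\sdots,\mc M_{i-1}$ with $\{k,p\},\{l,q\}\in\mc E$ and $(A_{i-1}\inv)_{pq}>0$; by the preceding paragraph the latter means $p$ and $q$ are path-connected through load nodes of $\mc M_1,\sdots,\mc M_{i-1}$.

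For the ``if'' direction, a path $k=v_0,v_1,\sdots,v_m=l$ through load nodes of $\mc M_1,\sdots,\mc M_i$ is chopped into maximal pieces between consecutive visits to $\mc M_i$: each such piece is either a single edge in $\mc M_i$ (so $a_{ii,uv}\neq 0$) or an excursion $u\to p\to\cdots\to q\to w$ with $p,q\in \mc M_1\cup\cdots\cup\mc M_{i-1}$, in which case the characterization above gives $S_{uw}\neq 0$. Stringing these together yields a path in the graph of $S$'s nonzero off-diagonals, so $(S\inv)_{kl}>0$. For the ``only if'' direction, I reverse the argument: a path in the off-diagonal graph of $S$ lifts, edge by edge, to either a single edge of $\Gamma$ inside $\mc M_i$ or to a path through load nodes of $\mc M_1,\sdots,\mc M_{i-1}$ provided by the positivity of some $(A_{i-1}\inv)_{pq}$, and concatenation gives the desired path.

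The main obstacle, and the only step that is not quite bookkeeping, is justifying the translation between positivity of entries of $A_{i-1}\inv$ and path-connectedness of the underlying load subgraph. I would discharge it once by the Neumann-series argument above, noting that it applies uniformly to every invertible M-matrix encountered (both $A_{i-1}$ and $S$), and then the two halves of the biconditional are direct path-concatenation arguments.
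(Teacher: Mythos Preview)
Your argument is correct, but it proceeds quite differently from the paper's proof. The paper observes that, by the block-inversion formula, $(a_{ii}-\alpha_i A_{i-1}\inv\alpha_i\T)\inv$ is precisely the principal submatrix of $A_i\inv$ indexed by the load nodes of $\mc M_i$; this collapses the question to whether $(A_i\inv)_{kl}>0$, which the paper then settles in one stroke by permuting $A_i$ into its irreducible blocks and invoking Perron--Frobenius (each irreducible block has a strictly positive inverse, and distinct blocks contribute zero). Your route instead stays with the Schur complement $S$ itself: you identify the off-diagonal support of $S$ combinatorially (no cancellation between $a_{ii,kl}\le 0$ and $(\alpha_i A_{i-1}\inv\alpha_i\T)_{kl}\ge 0$), use the Neumann series to relate positivity in $A_{i-1}\inv$ and in $S\inv$ to connectivity in the respective off-diagonal graphs, and then do a path-chopping argument to pass between paths in the load graph of $\mc M_1,\dots,\mc M_i$ and paths in the graph of $S$. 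The paper's approach is shorter because the Schur-inverse identity absorbs all of the path-chopping at once; your approach is more elementary (it avoids both Perron--Frobenius and the block-inverse identity) and makes explicit how an edge of $S$ encodes either a direct edge in $\mc M_i$ or an excursion through the lower microgrids. One small remark: you rely on $A_i$ and $A_{i-1}$ being invertible M-matrices as principal submatrices of $Y_{LL}$; this is standard but is not among the M-matrix facts listed in the preliminaries, so it is worth saying that a principal submatrix of a symmetric invertible M-matrix is again one (equivalently, it inherits the Z-pattern and positive definiteness).
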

\begin{proof}
The connected components of the graph induced by the load nodes in $\mc M_1,\sdots,\mc M_i$ correspond to the irreducible components of $A_i$. We can permute the rows and columns of $A_i$ such that $A_i$ is block-diagonal. By the Perron-Frobenius theorem for irreducible matrices, the inverse of each block is positive. It follows that the $(k,l)$-entry of $A_i\inv$ is positive if and only if $k$ and $l$ are path-connected with respect to the graph. It can be shown that $(a_{ii}-\alpha_iA_{i-1}\inv \alpha_i\T)\inv$ is the principal submatrix of $A_i\inv$ corresponding to load nodes of $\mc M_i$.
\end{proof}

The next lemma is the cornerstone of the main theorem in this section, and the reason why we need Assumption \ref{as}.
\begin{lemma}\label{positive vector}
Suppose Assumption \ref{as} holds.
The vector $C\T V_L^*$ is positive.
\end{lemma}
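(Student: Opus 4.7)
The plan is to use the BCD to express each entry of $C\T V_L^*$ as the open-circuit voltage at a load node of a lower-dimensional subgrid, and then deduce strict positivity via the graph-theoretic reasoning of Lemma \ref{path inverse} together with Assumption \ref{as}.

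Let $u := -Y_{LS}V_S$, so that $u \ge \mb 0$ and $V_L^* = A\inv u$ with $A := Y_{LL}$. Since $A = CDC\T$, we obtain $C\T V_L^* = D\inv C\inv u$. Using the explicit formula for $C\inv$ from Section \ref{bcd section}, the $i$-th block of $C\inv u$ equals $u_i - \alpha_i A_{i-1}\inv u_{<i}$, where $u_{<i}$ collects the entries of $u$ at the loads in $\mc M_1,\sdots,\mc M_{i-1}$. Since $D$ is block diagonal with blocks $S_i := a_{ii} - \alpha_i A_{i-1}\inv \alpha_i\T$, the $i$-th block of $C\T V_L^*$ is $S_i\inv(u_i - \alpha_i A_{i-1}\inv u_{<i})$.

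Applying the Schur-complement inverse formula to the $2\times 2$ block decomposition of $A_i$ into $A_{i-1}$, $\alpha_i$, $\alpha_i\T$, $a_{ii}$, this expression coincides with the bottom $n_i$ entries of $A_i\inv u_{\le i}$, where $u_{\le i} := \binom{u_{<i}}{u_i}$. Hence $(C\T V_L^*)_k = (A_i\inv u_{\le i})_k$ for every load $k \in \mc M_i$. Because $A_i$ is a principal submatrix of the invertible M-matrix $A$, it is itself an invertible M-matrix, so $A_i\inv \ge \mb 0$. Arguing exactly as in the proof of Lemma \ref{path inverse}, $(A_i\inv u_{\le i})_k > 0$ iff $k$ is path-connected in the subgraph of $\Gamma$ induced by the loads of $\mc M_1,\sdots,\mc M_i$ to some load $l$ with $u_l > 0$, i.e.\ a load adjacent in $\Gamma$ to a source node.

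The main obstacle is producing such an $l$ for each $k \in \mc M_i$ from Assumption \ref{as}; the interpretation stated immediately after that assumption is crucial here, as it supplies a path $k = v_0, v_1, \sdots, v_m$ in $\Gamma$ terminating at a source whose sequence of microgrid indices is non-increasing. Since $v_0 \in \mc M_i$, every intermediate node of this path lies in $\mc M_{\le i}$. Letting $v_s$ be the first source encountered, the nodes $v_0,\sdots,v_{s-1}$ are all loads in $\mc M_{\le i}$, the segment $v_0,\sdots,v_{s-1}$ certifies path-connectedness from $k$ to $l := v_{s-1}$ in the required subgraph, and the edge $\{v_{s-1},v_s\}$ gives $u_l > 0$. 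This yields $(C\T V_L^*)_k > 0$ for every load $k$.
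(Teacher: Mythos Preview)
Your proof is correct and follows essentially the same route as the paper: both compute the $i$-th block of $C\T V_L^*$ as $S_i\inv(u_i-\alpha_i A_{i-1}\inv u_{<i})$ and then argue positivity via the path-connectivity reasoning of Lemma \ref{path inverse} combined with Assumption \ref{as}. Your recognition that this block is exactly the bottom $n_i$ entries of $A_i\inv u_{\le i}$, together with your direct use of the ``descending path'' reformulation stated after Assumption \ref{as}, streamlines the paper's case-by-case induction into a single clean step, but the underlying argument is the same.
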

\begin{proof}
Recall that 
\begin{align*}
V_L^* = - A\inv Y_{LS} V_S = - C\invT D\inv C\inv Y_{LS} V_S,
\end{align*}
and therefore $C\T V_L^* = - D\inv C\inv Y_{LS} V_S$. Note that $V_S>\mb 0$ and $-Y_{LS}\ge 0$, and therefore $-Y_{LS}V_S\ge \mb 0$.\footnote{All load nodes share an edge with a generator node if and only if $-Y_{LS}V_S>\mb 0$.} Since $A$ is an invertible M-matrix, we have seen that $C\inv \ge 0$ and $D\inv \ge 0$. This implies that $- D\inv C\inv Y_{LS} V_S\ge 0$.

We consider the rows of $C\T V_L^*$ corresponding to the load nodes in $\mc M_i$, which are given by \begin{align}\label{positive argument}
-(a_{ii} - \alpha_i A_{i-1}\inv \alpha_i\T)\inv \begin{pmatrix}
-\alpha_i A_{i-1}\inv & I_{n_i} & 0
\end{pmatrix} Y_{LS} V_S.
\end{align}
Consider a load node $k$ in $\mc M_i$.

If Assumption \ref{as}.i holds for $k$, then $k$ is path-connect to a node $l$ in $\mc M_i$ with respect to the graph $\Gamma_{\mc M_i}$ (or coincides with $l$) such that $l$ shares an edge with a source node in $\mc M_i$. This implies that $(a_{ii} - \alpha_i A_{i-1}\inv \alpha_i\T)\inv_{kl} (-Y_{LS} V_S)_l > 0$ by Lemma \ref{path inverse} and since $ (-Y_{LS} V_S)_l > 0$ if $l$ shares an edge with a source node.

If Assumption \ref{as}.ii holds for $k$, then $k$ is path-connected to a node $l$ in $\mc M_j$ with respect to $\Gamma$ such that $j<i$, and $l$ is the only node in the path not in $\mc M_i$. 

If $l$ is a source node, then again $(a_{ii} - \alpha_i A_{i-1}\inv \alpha_i\T)\inv_{kl^\prime} (-Y_{LS} V_S)_{l^\prime} > 0$ where $l^\prime$ is the node in the path which shares an edge with $l$.

If $l$ is a load node, then, by applying induction on $i$ in the above, Assumption \ref{as} implies that $l$ is path-connected to a load node $m$ in $\mc M_{j^\prime}$ such that ${j^\prime}\le j$, which shares an edge with a source node. Hence 
\begin{align*}
(-(a_{ii} - \alpha_i A_{i-1}\inv \alpha_i\T)\inv\alpha_i)_{kl}(A_{i-1}\inv)_{lm}(-Y_{LS} V_S)_m > 0
\end{align*}
by Lemma \ref{path inverse}. This implies that each row of (\ref{positive argument}) is positive.
\end{proof}

The vector $C\T V_L^*$ is a lower bound for the open circuit voltages $V_L^*$ and roughly represents the effect of the potentials at the sources when there is no current flow at loads and where currents only flow over paths which descend the hierarchy.

The next theorem gives a sufficient condition for feasibility which is more conservative then Theorem \ref{SP result}, but incorporates the topological structure of the microgrids.

\begin{theorem}\label{chol theorem}
Let $Y_{LL}=CDC\T$, the block Cholesky decomposition such that the blocks (\ie microgrids) satisfy Assumption \ref{as}.
If 
\begin{align}\label{chol theorem condition}
D\inv C\inv [C\T V_L^*]\inv P_L < \frac 1 4 C\T V_L^*,
\end{align}
then $Y_{LL}\inv [V_L^*]\inv P_L < \frac 1 4 V_L^*$ and $(Y_{LL},-Y_{LS}V_S,P_L)$ is feasible.
\end{theorem}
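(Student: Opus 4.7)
The goal is to verify the Simpson--Porco inequality $Y_{LL}\inv[V_L^*]\inv P_L < \frac{1}{4}V_L^*$ and then invoke Theorem~\ref{SP result}. My plan has two short moves.

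First, I would peel off a factor of $C\invT$ from the hypothesis. By Section~\ref{bcd section}, $C\inv \ge 0$ with identity diagonal blocks, so the same holds for $C\invT$; in particular $C\invT \mb 1 \ge \mb 1 > \mb 0$, so $C\invT$ is order-preserving. Applying $C\invT$ to both sides of (\ref{chol theorem condition}) and using the identity $Y_{LL}\inv = C\invT D\inv C\inv$ yields
\begin{align*}
Y_{LL}\inv [C\T V_L^*]\inv P_L \;<\; \tfrac{1}{4}\,C\invT (C\T V_L^*) \;=\; \tfrac{1}{4}V_L^*.
\end{align*}
This is almost the Simpson--Porco bound; only the weight $[V_L^*]\inv$ has been replaced by $[C\T V_L^*]\inv$.

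Second, to close the gap I would prove the comparison $V_L^* \ge C\T V_L^* > \mb 0$. The strict positivity of $C\T V_L^*$ is precisely Lemma~\ref{positive vector}. For the inequality, write $V_L^* = C\invT (C\T V_L^*)$ and note that $C\invT = I + N$ with $N \ge 0$ (its off-diagonal blocks are transposes of $-\alpha_i A_{i-1}\inv \ge 0$). Since $C\T V_L^* \ge \mb 0$, we get $V_L^* = (I+N)(C\T V_L^*) \ge C\T V_L^*$. Componentwise reciprocation on the strictly positive diagonals reverses this to $[V_L^*]\inv \le [C\T V_L^*]\inv$, so that $[V_L^*]\inv P_L \le [C\T V_L^*]\inv P_L$. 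Applying the nonnegative matrix $Y_{LL}\inv$ preserves the direction of this inequality, and chaining with the bound from the first move gives $Y_{LL}\inv[V_L^*]\inv P_L < \frac{1}{4}V_L^*$. Feasibility of $(Y_{LL},-Y_{LS}V_S,P_L)$ then follows immediately from Theorem~\ref{SP result}.

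I do not anticipate a real obstacle: the nontrivial input is Lemma~\ref{positive vector}, which has already been established using Assumption~\ref{as}. Everything else is bookkeeping with the nonnegativity and unit-diagonal structure of $C\inv$, $D\inv$ provided by the BCD of the invertible M-matrix $Y_{LL}$ (Lemma~\ref{example for first assumption}), together with the factorization $Y_{LL}\inv = C\invT D\inv C\inv$.
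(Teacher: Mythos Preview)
Your proof is correct and follows essentially the same route as the paper's: both use Lemma~\ref{positive vector} for $C\T V_L^* > \mb 0$, the elementwise bound $I \le C\invT$ to deduce $C\T V_L^* \le V_L^*$ (hence $[V_L^*]\inv \le [C\T V_L^*]\inv$), and then order-preservation to reach the Simpson--Porco inequality and invoke Theorem~\ref{SP result}. The only cosmetic difference is the order of operations---the paper first swaps $[C\T V_L^*]\inv$ for $[V_L^*]\inv$ via the order-preserving map $D\inv C\inv$ and then multiplies by $C\invT$, while you multiply by $C\invT$ first and do the swap via $Y_{LL}\inv$.
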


\begin{proof}
From Lemma \ref{positive vector} it follows that $[C\T V_L^*]\inv $ is well-defined and nonnegative. The matrix $C\invT$ has ones on its diagonal, and since $C\invT$ is nonnegative, it follows that $I \le C\invT$. It follows that $C\T V_L^* \le C\invT C\T V_L^* = V_L^*$ and so $[C\T V_L^*]\inv P_L \ge [V_L^*]\inv P_L$. The matrix $D\inv C\inv$ is order-preserving and so, 
\begin{align*}
D\inv C\inv [V_L^*]\inv P_L \le D\inv C\inv [C\T V_L^*]\inv P_L < \frac 1 4 C\T V_L^*.
\end{align*}
Multiplication by the order-preserving matrix $C\invT$ yields
\begin{align*}
C\invT D\inv C\inv [V_L^*]\inv P_L = Y_{LL}\inv [V_L^*]\inv P_L < \frac 1 4 V_L^*.
\end{align*}
Theorem \ref{SP result} implies that $(Y_{LL},-Y_{LS}V_S,P_L)$ is feasible.
\end{proof}
\begin{figure}[b]
   \centering
   \begin{tabular}{@{}c@{\hspace{.5cm}}c@{}}
       \includegraphics[width=0.95\linewidth]{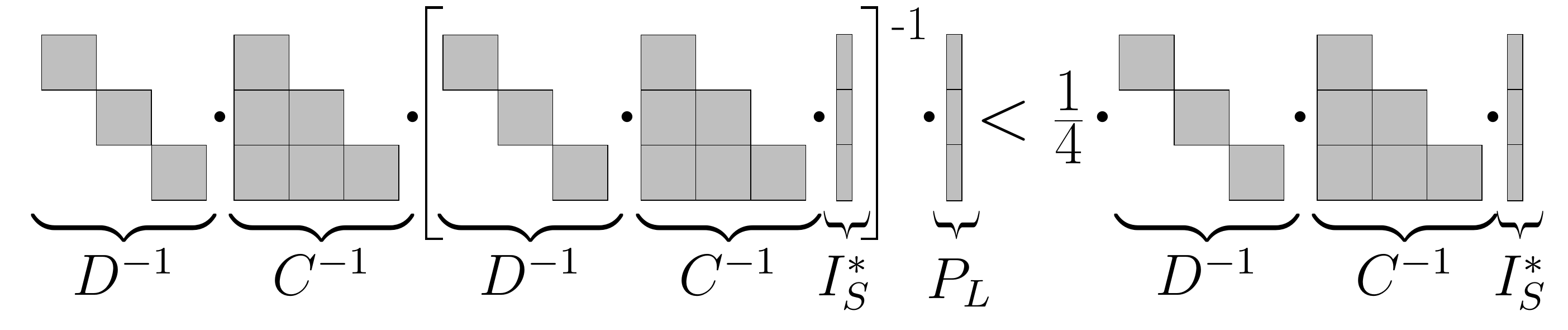}
   \end{tabular}
\caption{A schematic representation of inequality \eqref{chol theorem condition} and \eqref{chol theorem condition rewritten} for three microgrids, where $I_S^* := -Y_{LS}V_S$. Due to the block lower triangular structure of $C\inv$ and $D\inv$, 
the inequality concerning the first two block rows is independent of the third block row of matrices $D\inv$, $C\inv$ and vectors $I_S^*$, $P_L$. This suggests that the introduction of a fourth block row to $D\inv$ and $C\inv$ will therefore not compromise the inequality of the first three block rows.}
\label{schematic condition figure}
\end{figure}
Theorem \ref{chol theorem} implies that if \eqref{chol theorem condition} holds, both $(C D,-Y_{LS}V_S,P_L)$ and $(Y_{LL},-Y_{LS}V_S,P_L)$ are feasible. More generally, it can be shown that $(C D,-Y_{LS}V_S,P_L)$ is feasible only if $(Y_{LL},-Y_{LS}V_S,P_L)$ is feasible.

%Interpretation of the result:
%Lower-triangular nature of result

The condition \eqref{chol theorem condition} in Theorem \ref{chol theorem} is equivalent to
\begin{align}\label{chol theorem condition rewritten}
D\inv C\inv [D\inv C\inv I_S^*]\inv P_L < \tfrac 1 4 D\inv C\inv I_S^*, 
\end{align}
where $I_S^* := -Y_{LS}V_S$, the current from sources flowing to neighboring loads. 
The block structure of \eqref{chol theorem condition rewritten} is schematically represented by Figure \ref{schematic condition figure}.

The introduction of a new microgrid gives rise to a new block row of $C\inv$ and $D\inv$.
Figure \ref{schematic condition figure} illustrates a plug-and-play condition such that, if a power grid satisfies \eqref{chol theorem condition}, and a microgrid satisfies a condition similar to the last block row in Figure \ref{schematic condition figure}, then their interconnection satisfies \eqref{chol theorem condition} as well.
However, there are some technicalities which prevent us from directly applying Theorem \ref{chol theorem} to formulate a plug-and-play condition for feasibility. This is investigated further in Section \ref{dynamic section}.

%\clearpage

\section{Microgrids with a dynamic\break interconnection topology}\label{dynamic section}
In this section we relate the feasibility of a power grid to an associated ``virtual power grid'', such that Theorem \ref{chol theorem} can be applied. From this, a plug-and-play condition for feasibility is obtained, which is the main result of this paper.

\begin{example}\label{diagonal example}
Consider the power grid in Figure \ref{example figure} where we assume unit line conductances. 
We consider the microgrid $\mc M_1$ in island mode and let $Y$ represent its Laplacian matrix. We have $Y_{LL} = \begin{pmatrix}
1 & 0 \\ 0 & 1
\end{pmatrix}$.
We proceed by connecting $\mc M_2$ to $\mc M_1$ in the same fashion as in Figure \ref{example figure}, and let $\hat Y$ represent the Laplacian matrix of the interconnected microgrids. It follows that 
\begin{align*}
\hat Y_{LL} = \l(\begin{array}{cc|ccc}
2 & 0 & -1 & 0 & 0 \\ 
0 & 3 & 0 & -1 & -1 \\
\hline
-1 & 0 & 1 & 0 & 0 \\
0 & -1 & 0 & 4 & -1 \\
0 & -1 & 0 & -1 & 3
\end{array}\r),
\end{align*}
where the partitioning of the matrix correspond to the two microgrids. 
Note that the diagonal elements corresponding to node 1 and node 2 have increased.
\end{example}

%The goal of this section is to apply Theorem \ref{chol theorem} to a power grid, and to obtain a plug-and-play conditions as illustrated in Figure \ref{schematic condition figure}.
%
%
%To apply Theorem \ref{chol theorem}, we need to submatrices of the Laplacian corresponding to microgrids to remain constant as more microgrids are attached.
%Example \ref{diagonal example} indicates that the interconnection of multiple microgrids will increase the diagonal elements of the submatrix of $Y_{LL}$ corresponding to the load nodes of each microgrid.
%This implies submatrices of Laplacian do not remain constant as more microgrids are connected.
%
%
%, increasing the voltage potential at node 1 will increase the outgoing current of node 1.
%
%We consider the 

To formulate a plug-and-play condition as suggested in Section \ref{iterative section}, it is
essential for Theorem \ref{chol theorem} that the blocks of the BCD of $Y_{LL}$ remain unaltered when new microgrids are attached to the power grid.
However, from Example \ref{diagonal example} we see that, when new lines are introduced, the diagonal elements of the matrix $\hat Y_{LL}$ change with respect to $Y_{LL}$.

\subsection{Virtual shunts and virtual power grids}\label{virtual section}
To address the issue above, we increase the diagonal elements of the Laplacian by temporarily introducing shunts\footnote{A shunt is a component at a node which extracts current proportional to the voltage potential at the node.} at load nodes. These shunts are later decreased (and potentially removed) as more lines are interconnected to a load node.
We refer to these shunts as \emph{virtual shunts}. 
The result is that the diagonal elements of the Laplacian matrix remain constant when new microgrids are attached.

Virtual shunts are not physically present in the power grid and should be seen as placeholders for prospective lines. 
%We denote a power grid as ``virtual" whenever we have introduced virtual shunts to the power grid. 
We refer to a power grid with virtual shunts as a \emph{virtual power grid}. 
We refer to other power grids as physical power grids, to prevent ambiguity.
It should be understood that a virtual power grid does not fully capture the behavior of the associated physical power grid. However, the feasibility of the two is related by the following lemma.

\begin{lemma}\label{virtual lemma}
Let $A$ be an invertible M-matrix, $E\ge 0$ diagonal, $b\ge \mb 0$ and $c>\mb 0$ such that $(A+E)\inv b > \mb 0$.
Suppose $(A+E)\inv [(A+E)\inv b]\inv c < \frac 1 4 (A+E)\inv b$, then $A\inv [A\inv b]\inv c < \frac 1 4 A\inv b$.
\end{lemma}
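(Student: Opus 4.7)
The plan is to compare $v := (A+E)^{-1}b$ with $w := A^{-1}b$, show $v \le w$, and then push the hypothesised bound through using the resolvent-type identity $A^{-1} = (I+A^{-1}E)(A+E)^{-1}$.

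First I would observe that $A+E$ is again an invertible M-matrix (adding a nonnegative diagonal preserves the M-matrix structure and strictly enlarges the diagonal dominance), so both $A^{-1}$ and $(A+E)^{-1}$ are nonnegative. The standard identity
$$(A+E)^{-1} - A^{-1} = A^{-1}\bigl(A-(A+E)\bigr)(A+E)^{-1} = -A^{-1}E(A+E)^{-1} \le 0$$
yields $A^{-1} \ge (A+E)^{-1} \ge 0$. Multiplying by $b \ge \mathbf{0}$ gives $w \ge v$, and the assumption $v > \mathbf{0}$ then implies $[v]^{-1} \ge [w]^{-1} \ge 0$ componentwise.

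Next, since $c > \mathbf{0}$ and $(A+E)^{-1}\ge 0$, combining $[v]^{-1} \ge [w]^{-1}$ with the hypothesis gives
$$(A+E)^{-1}[w]^{-1}c \;\le\; (A+E)^{-1}[v]^{-1}c \;<\; \tfrac14 v.$$
Now I would rewrite $A^{-1} = A^{-1}(A+E)(A+E)^{-1} = (I+A^{-1}E)(A+E)^{-1}$ and apply it:
$$A^{-1}[w]^{-1}c = (I+A^{-1}E)(A+E)^{-1}[w]^{-1}c \;<\; \tfrac14 (I+A^{-1}E)v,$$
using the order-preservation of $I+A^{-1}E$ (it is nonnegative and satisfies $(I+A^{-1}E)\mathbf 1 \ge \mathbf 1 > \mathbf 0$, which the paper shows upgrades weak inequalities on strict inequalities to strict). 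Finally the right-hand side telescopes:
$$\tfrac14 (I+A^{-1}E)v = \tfrac14 A^{-1}(A+E)v = \tfrac14 A^{-1}b = \tfrac14 w,$$
which is exactly the desired conclusion.

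The main subtlety I anticipate is keeping the inequality strict throughout, since two of the three steps only give weak inequalities individually. This is handled by placing the strict step (the hypothesis) first and then invoking the paper's fact that an order-preserving matrix maps strict entrywise inequalities to strict entrywise inequalities; everything else is routine M-matrix algebra once the identity $(I+A^{-1}E)(A+E)^{-1}b = A^{-1}b$ is spotted.
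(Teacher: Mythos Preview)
Your proof is correct and essentially the same as the paper's: both rely on the identity $A^{-1}(A+E)=I+A^{-1}E\ge I$ (your ``resolvent-type identity'') to compare $v=(A+E)^{-1}b$ with $w=A^{-1}b$, and then combine the bound $[w]^{-1}\le[v]^{-1}$ with multiplication by the order-preserving matrix $I+A^{-1}E$. The only cosmetic difference is the order of the two reductions---the paper first multiplies the hypothesis by $A^{-1}(A+E)$ and then replaces $[v]^{-1}$ by $[w]^{-1}$, while you do these in the opposite order.
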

\begin{proof}
Note that $A\inv(A+E) = I + A\inv E \ge I$ is order-preserving, which implies that $A\inv b \ge (A+E)\inv b$, and so $[A\inv b]\inv \le [(A+E)\inv b]\inv$. By multiplying our condition with $A\inv(A+E)$, we obtain
{\small$A\inv [(A+E)\inv b]\inv c < \tfrac 1 4 A\inv b$}. The inequality $[A\inv b]\inv \le [(A+E)\inv b]\inv$ implies that 
$A\inv [A\inv b]\inv c < \frac 1 4 A\inv b$, as $A\inv$ is order-preserving.
\end{proof}
In the context of Theorem \ref{SP result}, Lemma \ref{virtual lemma} implies that if $(Y_{LL}+E,Y_{LS}V_S,P_L)$ with  diagonal $E\ge0$ satisfies the condition of Theorem \ref{SP result}, then so does $(Y_{LL},Y_{LS}V_S,P_L)$, implying that both are feasible. 
The power grid corresponding to $(Y_{LL} + E,Y_{LS}V_S,P_L)$ is a virtual counterpart to $(Y_{LL},Y_{LS}V_S,P_L)$, with $E$ representing the virtual shunts.
More generally, it can be shown that $(Y_{LL}+E,Y_{LS}V_S,P_L)$ is feasible only if $(Y_{LL},Y_{LS}V_S,P_L)$ is feasible.

Note that virtual shunts are not needed for source nodes. This follows directly from the observation that \eqref{main problem} does not appear in $Y_{SS}$. Hence, lines between sources can be altered freely, without compromising feasibility.

It is essential that the virtual shunts are chosen properly.
Choosing $E$ large leads to more conservative conditions, whereas choosing $E$ small restricts the possible interconnections with potential microgrids.

However, from a practical point of view, it is reasonable to assume that the conductances of prospective lines are known \textit{a priori}. For example, some lines may already exist but are not in use. Also, there might be limitations on the number of lines a node can connect with, which would give an upper bound for the virtual shunt. In particular, some nodes might never connect to nodes outside their microgrid and do not require a virtual shunt.

\subsection{Main Theorem: Plug-and-play solvability}\label{main theorem section}
In this section we present the main result of the paper. The main line of this section is as follows.

We consider a power grid and define a virtual counterpart in the sense of Section \ref{virtual section}.
We define a BCD of the lines of the loads in the virtual power grid.
We assume that the BCD satisfies the conditions for Theorem \ref{chol theorem}.
This implies that both the power grid and its virtual counterpart are feasible, by Lemma \ref{virtual lemma}.

We proceed by introducing a microgrid and again define a virtual counterpart.
We interconnect both virtual grids under the assumption that the physical interconnection satisfies Assumption \ref{as} and does not exceed the virtual shunts.
We extend the BCD of the virtual power grid to this virtual interconnection and use this to state the main theorem.

The main theorem gives a sufficient condition such that this interconnected virtual power grid is feasible. Moreover, the theorem states that the physical interconnection of the power grid and the microgrid is therefore also feasible, by Lemma \ref{virtual lemma}.\\

Consider a DC power grid described by the Laplacian matrix
$\begin{pmatrix}
Y_{L_1L_1} & Y_{L_1S_1}\\
Y_{S_1L_1} & Y_{S_1S_1}
\end{pmatrix}$,
together with the virtual shunts $E_{L_1}$ and load voltages $V_{L_1}$.
Let $P_{L_1}$ be the power demand at the load nodes and $V_{L_1}^\prime := -(Y_{L_1L_1} + E_{L_1})\inv Y_{L_1S_1}V_{S_1}$ the virtual open-circuit voltages, where $V_{S_1}$ are the source voltages. The vector $V_{L_1}^\prime$ is a lower bound for the open-circuit voltages of the physical power grid.

Let $C,D$ such that $CDC\T = Y_{L_1L_1} + E_{L_1}$ is a BCD, where the blocks are such that they satisfy Assumption \ref{as}.
%Let $\hat P_{L_1}:= P_{L_1} + [v_{L_1}][C\T V_{L_1}^\prime]Y_{L_1L_1}\inv [C\T V_{L_1}^\prime]\inv P_{L_1}$.
\begin{assumption}\label{assumption to main theorem}
The following inequality holds:
\begin{align*}
D\inv C\inv[C\T V_{L_1}^\prime]\inv P_{L_1} < \tfrac 1 4 C\T V_{L_1}^\prime.
\end{align*}
\end{assumption}
Assumption \ref{assumption to main theorem} implies that $(Y_{L_1L_1}+E_{L_1}, Y_{L_1S_1}V_{S_1}, P_{L_1})$ is feasible by Theorem \ref{chol theorem}. It follows by Lemma \ref{virtual lemma} that
$(Y_{L_1L_1},-Y_{L_1S_1}V_{S_1},P_{L_1})$, the physical counterpart, is feasible as well.
\\

In addition, consider a microgrid described by the Laplacian matrix
$\begin{pmatrix}
Y_{L_2L_2} & Y_{L_2S_2}\\
Y_{S_2L_2} & Y_{S_2S_2}
\end{pmatrix}$, together with the virtual shunts $E_{L_2}$ and load voltages $V_{L_2}$.
Let $P_{L_2}$ be the power demand at the load nodes.

Let the physical interconnection of the power grid and the microgrid be given by the Laplacian matrix 
\begin{align*}
\left(\begin{smallmatrix}
Y_{L_1L_1} + \hat E_{L_1} & Y_{L_1L_2} & Y_{L_1S_1} & Y_{L_1S_2}\\
Y_{L_2L_1} & Y_{L_2L_2} + \hat E_{L_2} & Y_{L_2S_1} & Y_{L_2S_2}\\
Y_{S_1L_1} & Y_{S_1L_2} & Y_{S_1S_1} + \hat E_{S_1} & Y_{S_1S_2}\\
Y_{S_2L_1} & Y_{S_2L_2} & Y_{S_2S_1} & Y_{S_2S_2} + \hat E_{S_2}
\end{smallmatrix}\right),
\end{align*}
where $\hat E_{L_1}$, $\hat E_{L_2}$, $\hat E_{S_1}$, $\hat E_{S_2}$ are defined as follows:
\begin{align*}
\hat E_{L_1} :=-[Y_{L_1L_2}\mb 1 + Y_{L_1S_2}\mb 1]; \hat E_{S_1} :=-[Y_{S_1L_2}\mb 1 + Y_{S_1S_2}\mb 1];\\
\hat E_{L_2} := -[Y_{L_2L_1}\mb 1 + Y_{L_2S_1}\mb 1]; \hat E_{S_2} :=-[Y_{S_2L_1}\mb 1 + Y_{S_2S_1}\mb 1].
\end{align*}

We make the following assumption on the interconnection.
\begin{assumption}\label{interconnection assumption}
The interconnection of the power grid and the microgrid is such that $\hat E_{L_1} \le E_{L_1}$, $\hat E_{L_2} \le E_{L_2}$ and Assumption \ref{as} holds.
\end{assumption}

The bounds on $\hat E_{L_1}$ and $\hat E_{L_2}$ imply that the virtual interconnection of both virtual grids does not exceed the virtual shunts. This will allow us to use Lemma \ref{virtual lemma} in Theorem \ref{main theorem}.
%; \ie $Y_{L_1L_1} + \hat E_{L_1} \le Y_{L_1L_1} + E_{L_1}$ and $Y_{L_2L_2} + \hat E_{L_2} \le Y_{L_2L_2} + E_{L_2}$.
%Let $A:= \begin{pmatrix}
%Y_{L_1L_1} + [v_{L_1}]  & Y_{L_1L_2}\\
%Y_{L_2L_1} & Y_{L_2L_2} + [v_{L_2}]
%\end{pmatrix}$, the submatrix of $Y$ corresponding to the load nodes.

Let $\tilde C,\tilde D$ such that 
\begin{align*}
\tilde C\tilde D\tilde C\T = \begin{pmatrix}
Y_{L_1L_1} + E_{L_1} & Y_{L_1L_2} \\
Y_{L_2L_1} & Y_{L_2L_2} + E_{L_2} 
\end{pmatrix}
\end{align*} is a BCD. From Section \ref{bcd section} it follows that 
\begin{align*}
\tilde C\inv &= \begin{pmatrix}
C\inv & 0 \\
-Y_{L_2L_1} (CDC\T)\inv & I_n 
\end{pmatrix};\\
\tilde D\inv &= \begin{pmatrix}
D\inv & 0 \\ 0 & R\inv
\end{pmatrix}, 
\end{align*}
where we define 
\begin{align*}
R:= Y_{L_2L_2} + E_{L_2} - Y_{L_2L_1} (CDC\T)\inv Y_{L_1L_2}.
\end{align*}

Define $V_L^\prime := $
$-(\tilde C\tilde D\tilde C\T )\inv \l(\begin{smallmatrix}
Y_{L_1S_1} & Y_{L_1S_2}\\
Y_{L_2S_1} & Y_{L_2S_2}
\end{smallmatrix}\r)\l(\begin{smallmatrix}
V_{S_1}\\
V_{S_2}
\end{smallmatrix}\r)$, which is
a lower bound for the open-circuit voltages of the interconnected power grid.

Finally, let $(\tilde C\T V_L^\prime)_{L_1}$ denote the entries of $\tilde C\T V_L^\prime$ corresponding to the load nodes of the original power grid, and similar for $(\tilde C\T V_L^\prime)_{L_2}$ and the microgrid. 
%Then $(\tilde C\T V_L^\prime)_{L_1} \ge C\T V_{L_1}^\prime$.

\begin{theorem}[Plug-and-Play Solvability]\label{main theorem}
Suppose that Assumptions \ref{assumption to main theorem} and \ref{interconnection assumption} are satisfied.
If the condition
\begin{multline}\label{main theorem condition}
-R\inv Y_{L_2L_1} (CDC\T)\inv [(\tilde C\T V_L^\prime)_{L_1}]\inv P_{L_1}\\ + R\inv [(\tilde C\T V_L^\prime)_{L_2}]\inv P_{L_2} < \tfrac 1 4 (\tilde C\T V_L^\prime)_{L_2}
\end{multline}
holds, then
\begin{align}\label{main theorem final condition}
\tilde D\inv \tilde C\inv[\tilde C\T V_L^\prime]\inv \l(\begin{smallmatrix} P_{L_1} \\ P_{L_2} \end{smallmatrix}\r) < \tfrac 1 4 \tilde C\T V_L^\prime
\end{align}
and the interconnection of the power grid and the microgrid is feasible.

%If interconnection with new microgrid satisfying [condition on conductances of lines], and [some condition for the new power grid], then the whole power grid after interconnection is guaranteed to be feasible.
\end{theorem}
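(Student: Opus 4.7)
The strategy is to view condition (\ref{main theorem condition}) as the $L_2$-block row of the full Cholesky-type inequality (\ref{main theorem final condition}), to show that the $L_1$-block row of (\ref{main theorem final condition}) is automatic from Assumption \ref{assumption to main theorem}, and then to chain Theorem \ref{chol theorem} (to pass from (\ref{main theorem final condition}) to feasibility of the interconnected virtual grid) with Lemma \ref{virtual lemma} (to pass from the virtual interconnection to the physical one, using the shunt domination $\hat E_{L_i}\le E_{L_i}$ from Assumption \ref{interconnection assumption}).

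The explicit forms of $\tilde C\inv$ and $\tilde D\inv$ recorded in Section \ref{main theorem section} yield
\begin{align*}
\tilde D\inv \tilde C\inv = \begin{pmatrix} D\inv C\inv & 0 \\ -R\inv Y_{L_2L_1}(CDC\T)\inv & R\inv \end{pmatrix},
\end{align*}
so the $L_2$-block row of (\ref{main theorem final condition}) is literally (\ref{main theorem condition}). For the $L_1$-block row I would expand
\begin{align*}
(\tilde C\T V_L^\prime)_{L_1} = -D\inv C\inv(Y_{L_1S_1}V_{S_1} + Y_{L_1S_2}V_{S_2}) = C\T V_{L_1}^\prime + D\inv C\inv(-Y_{L_1S_2})V_{S_2},
\end{align*}
using $C\T V_{L_1}^\prime = -D\inv C\inv Y_{L_1S_1}V_{S_1}$ from the original grid. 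Since $-Y_{L_1S_2}\ge 0$, $V_{S_2}>\mb 0$, and $D\inv C\inv\ge 0$ (because $Y_{L_1L_1}+E_{L_1}$ is an invertible M-matrix by Lemma \ref{example for first assumption}), this gives the monotone bound $(\tilde C\T V_L^\prime)_{L_1}\ge C\T V_{L_1}^\prime > \mb 0$; Lemma \ref{positive vector} applied to the enlarged grid (valid by Assumption \ref{interconnection assumption}) ensures that $\tilde C\T V_L^\prime>\mb 0$ so all inverses below are well-defined.

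Combining this inequality with Assumption \ref{assumption to main theorem} and order-preservation of $D\inv C\inv$ yields
\begin{align*}
D\inv C\inv[(\tilde C\T V_L^\prime)_{L_1}]\inv P_{L_1} \le D\inv C\inv[C\T V_{L_1}^\prime]\inv P_{L_1} < \tfrac 1 4 C\T V_{L_1}^\prime \le \tfrac 1 4 (\tilde C\T V_L^\prime)_{L_1},
\end{align*}
which is the $L_1$-block of (\ref{main theorem final condition}). Having (\ref{main theorem final condition}) in hand, Theorem \ref{chol theorem} gives feasibility of the interconnected virtual grid. The physical interconnected load Laplacian differs from the virtual one only by the nonnegative diagonal $\diag(E_{L_1}-\hat E_{L_1}, E_{L_2}-\hat E_{L_2})$ (Assumption \ref{interconnection assumption}), so Lemma \ref{virtual lemma} transfers feasibility to the physical interconnection. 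The main obstacle I anticipate is the monotonicity step $(\tilde C\T V_L^\prime)_{L_1}\ge C\T V_{L_1}^\prime$: one must identify that plugging in the new microgrid can only raise the $L_1$-coordinates of the virtual open-circuit voltage, which is exactly the nonnegativity of $D\inv C\inv(-Y_{L_1S_2})V_{S_2}$ and is the whole reason the plug-and-play hierarchy closes.
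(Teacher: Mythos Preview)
Your proposal is correct and follows essentially the same route as the paper's proof: identify (\ref{main theorem condition}) as the $L_2$-block of (\ref{main theorem final condition}) via the explicit block form of $\tilde D\inv\tilde C\inv$, establish the $L_1$-block from Assumption \ref{assumption to main theorem} using the monotone bound $(\tilde C\T V_L^\prime)_{L_1}\ge C\T V_{L_1}^\prime$ (which comes from $-Y_{L_1S_2}V_{S_2}\ge\mb 0$ and $D\inv C\inv\ge 0$), then apply Theorem \ref{chol theorem} and Lemma \ref{virtual lemma} in sequence. Your treatment is in fact slightly more explicit than the paper's in justifying well-definedness of the inverses via Lemma \ref{positive vector} and in spelling out that the shunt discrepancy is the nonnegative diagonal $\diag(E_{L_1}-\hat E_{L_1},E_{L_2}-\hat E_{L_2})$.
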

\begin{proof}
Due to the block triangular structure of $\tilde C$, the rows of (\ref{main theorem final condition}) corresponding to the virtual power grid are given by 
\begin{align}\label{main theorem first block}
D\inv C\inv[(\tilde C\T V_L^\prime)_{L_1}]\inv P_{L_1} < \tfrac 1 4 (\tilde C\T V_L^\prime)_{L_1}.
\end{align} 
%or equivalently,
%\begin{align*}
%[(\tilde C\T V_L^\prime)_{L_1}]\inv D\inv C\inv[(\tilde C\T V_L^\prime)_{L_1}]\inv P_{L_1} < \tfrac 1 4 \mb 1.
%\end{align*} 
For the same reason we have 
\begin{align*}
(\tilde C\T V_L^\prime)_{L_1} &= - D\inv C\inv (Y_{L_1S_1}V_{S_1} + Y_{L_1S_2}V_{S_2})\\
& \ge  -D\inv C\inv Y_{L_1S_1}V_{S_1} = C\T V_{L_1}^\prime,
\end{align*}
which implies $[(\tilde C\T V_L^\prime)_{L_1}]\inv \le [C\T V_{L_1}^\prime]\inv$.
Assumption \ref{assumption to main theorem} implies that 
\begin{align*}
D\inv C\inv[(\tilde C\T V_L^\prime)_{L_1}]\inv P_{L_1} &\le D\inv C\inv[C\T V_{L_1}^\prime]\inv P_{L_1} \\
< \tfrac 1 4 C\T V_{L_1}^\prime &\le \tfrac 1 4 (\tilde C\T V_L^\prime)_{L_1}.
\end{align*}
Therefore (\ref{main theorem first block}) is satisfied.

Note that (\ref{main theorem condition}) is equivalent to the rows of (\ref{main theorem final condition}) corresponding to the virtual microgrid. Hence if (\ref{main theorem condition}) holds, then so does (\ref{main theorem final condition}).

If (\ref{main theorem final condition}) is satisfied, then by Theorem \ref{chol theorem} the virtual interconnection \begin{align*}
(\l(\begin{smallmatrix}
Y_{L_1L_1} + E_{L_1} & Y_{L_1L_2} \\
Y_{L_2L_1} & Y_{L_2L_2} + E_{L_2} 
\end{smallmatrix}\r),\l(\begin{smallmatrix}
Y_{L_1S_1} & Y_{L_1S_2}\\
Y_{L_2S_1} & Y_{L_2S_2}
\end{smallmatrix}\r)\l(\begin{smallmatrix}
V_{S_1}\\
V_{S_2}
\end{smallmatrix}\r),\l(\begin{smallmatrix} P_{L_1} \\ P_{L_2} \end{smallmatrix}\r))
\end{align*}
is feasible. Since $\hat E_{L_1} \le E_{L_1}$ and $\hat E_{L_2} \le E_{L_2}$, we may use Lemma \ref{virtual lemma} to conclude that the physical interconnection
\begin{align*}
(\l(\begin{smallmatrix}
Y_{L_1L_1} + \hat E_{L_1} & Y_{L_1L_2} \\
Y_{L_2L_1} & Y_{L_2L_2} + \hat E_{L_2} 
\end{smallmatrix}\r),\l(\begin{smallmatrix}
Y_{L_1S_1} & Y_{L_1S_2}\\
Y_{L_2S_1} & Y_{L_2S_2}
\end{smallmatrix}\r)\l(\begin{smallmatrix}
V_{S_1}\\
V_{S_2}
\end{smallmatrix}\r),\l(\begin{smallmatrix} P_{L_1} \\ P_{L_2} \end{smallmatrix}\r))
\end{align*}
is feasible as well.
\end{proof}

Note that Assumption \ref{assumption to main theorem} and \eqref{main theorem final condition} are of the same form. Hence, microgrids can be attached to a power grid in an iterative fashion by using Theorem \ref{main theorem} to guarantee feasibility. This provides a method to determine if a power grid remains feasible after connecting a specific microgrid

\subsection{Example of plug-and-play solvability}
The following example illustrates the application of Theorem \ref{main theorem} to determine if a power grid remains feasible after interconnecting a specific microgrid.

\begin{example}
Consider again the power grid in Figure \ref{example figure} with unit line conductance, where we connect $\mc M_2$ to the power grid $\mc M_1$. 
We continue in the notation of Section \ref{main theorem section}.

Let the voltage potentials at each source be 1 volt, and consider constant-power loads such that $P_{L_1}=\frac 1 {25} \begin{pmatrix}
2& 2
\end{pmatrix}\T$ and $P_{L_2}=\frac 1 {25} \begin{pmatrix}
1& 9& 7
\end{pmatrix}\T$.

For microgrids $\mc M_1$ and $\mc M_2$ we have \begin{align*}
Y_{L_1L_1} &= \begin{pmatrix}
1 & 0 \\ 0 & 1
\end{pmatrix};&&Y_{L_2L_2}= \begin{pmatrix}
0 & 0 & 0\\ 
0 & 2 & -1\\
0 & -1 & 2
\end{pmatrix}.
\end{align*}
We consider the virtual shunts
\begin{align*}
E_{L_1}=\begin{pmatrix}
1 & 0 \\ 0 & 2
\end{pmatrix};&&E_{L_2}=\begin{pmatrix}
1 & 0 & 0\\ 
0 & 2 & 0\\
0 & 0 & 1
\end{pmatrix}.
\end{align*}

%We will first verify that Assumption \ref{assumption to main theorem} is satisfied.
We have $C=\begin{pmatrix}
1 & 0 \\ 0 & 1
\end{pmatrix}$ and $D=Y_{L_1L_1} + E_{L_1} = \begin{pmatrix}
2 & 0 \\ 0 & 3
\end{pmatrix}$.\linebreak
The virtual open-circuit voltages for $\mc M_1$ are $V_{L_1}^\prime = \begin{pmatrix}
\frac 1 2 &\frac 1 3
\end{pmatrix}\T$. It follows that $C\T V_{L_1}^\prime = \begin{pmatrix}
\frac 1 2 &\frac 1 3
\end{pmatrix}\T$ and 
\begin{align*}
&D\inv C\inv[C\T V_{L_1}^\prime]\inv P_L = \begin{pmatrix}
\frac 1 2 & 0 \\ 0 & \frac 1 3
\end{pmatrix} \begin{bmatrix}
\begin{pmatrix}
\frac 1 2 \\ \frac 1 3
\end{pmatrix}
\end{bmatrix}\inv \begin{pmatrix}
\frac 2 {25} \\ \frac 2 {25}
\end{pmatrix} \\
&\quad= \begin{pmatrix}
\frac 2 {25} \\ \frac 2 {25}
\end{pmatrix} < \begin{pmatrix}
\frac 1 8 \\ \frac 1 {12}
\end{pmatrix} = \tfrac 1 4 \begin{pmatrix}
\frac 1 2 \\ \frac 1 3
\end{pmatrix} = \tfrac 1 4 C\T V_{L_1}^\prime.
\end{align*}
Hence Assumption \ref{assumption to main theorem} is satisfied and $\mc M_1$ is feasible in island mode.

We consider the interconnection in Figure \ref{example figure}, which satisfies $\hat E_{L_2} = E_{L_2}$ and $\hat E_{L_2} = E_{L_2}$. Since Assumption \ref{as} holds for the interconnection, Assumption \ref{interconnection assumption} is satisfied.

We compute $V_L^\prime = \begin{pmatrix}
1 & 1 & 1 & 1 & 1
\end{pmatrix}\T$ and $\tilde C\T V_L^\prime = \begin{pmatrix}
\frac 1 2 & \frac 1 3 & 1 & 1 & 1
\end{pmatrix}\T$. 
Note that $Y_{L_2L_1}=\begin{pmatrix}
-1 & 0 \\
0 & -1 \\
0 & -1 
\end{pmatrix}$.

We compute $R\inv = \begin{pmatrix} 
2 & 0 & 0 \\
0 & \frac 1 3 & \frac 1 6\\
0 & \frac 1 6 & \frac {11} {24}
\end{pmatrix}$. Continuing from the left-hand side of \eqref{main theorem condition}, we have 
\begin{align*}
-\begin{pmatrix} 
2 & 0 & 0 \\
0 & \frac 1 3 & \frac 1 6\\
0 & \frac 1 6 & \frac {11} {24}
\end{pmatrix}\begin{pmatrix}
-1 & 0 \\
0 & -1 \\
0 & -1 
\end{pmatrix}\begin{pmatrix}
\frac 1 2 & 0 \\ 0 & \frac 1 3
\end{pmatrix}\begin{bmatrix}\begin{pmatrix}
\frac 1 2 \\ \frac 1 3
\end{pmatrix}\end{bmatrix}\inv \begin{pmatrix}
\frac 2 {25} \\ \frac 2 {25}
\end{pmatrix}\\
+\begin{pmatrix} 
2 & 0 & 0 \\
0 & \frac 1 3 & \frac 1 6\\
0 & \frac 1 6 & \frac {11} {24}
\end{pmatrix} \begin{bmatrix}\begin{pmatrix}
1 \\ 1 \\ 1
\end{pmatrix}\end{bmatrix}\inv \begin{pmatrix}
\frac 1 {25} \\ \frac 9 {25} \\ \frac 7 {25}
\end{pmatrix} \\
\approx \begin{pmatrix}
0.24\\0.21\\0.24
\end{pmatrix} < \tfrac 1 4\begin{pmatrix}
1\\1\\1
\end{pmatrix} = \tfrac 1 4 (C\T V_L^\prime)_{L_2}.
\end{align*}
Hence, the interconnection of the two microgrids is feasible, by Theorem \ref{main theorem}.
\end{example}

\section{Conclusion and future research}\label{conclusion section}
In this paper we studied the interconnection of purely resistive DC microgrids with constant-power loads. We have presented a sufficient condition for the feasibility of a DC power grid. In addition, we have presented a sufficient condition for the interconnection of a microgrid such that the former sufficient condition also holds for their interconnection. This establishes a method to determine if a power grid remains feasible after connecting a specific microgrid. The method was illustrated by an example.

The main result of the paper is directly derived from paper \cite{paper1} and is more conservative than the condition in \cite{paper1}. However, the novelty of the presented result is its plug-and-play property. It can be shown that the implications for feasibility presented in this paper are not specific to the condition in \cite{paper1} and hold in the general setting. Hence, alternative or improved sufficient conditions for feasibility may lead to other conditions with the plug-and-play property.

Other topics for future research may include more accurate models for DC power flow, improvement on voltage controller design for guaranteeing feasibility and a detailed analysis of the conservative nature of the presented result.

\end{document}